\newcommand{\indic}[1]{\ensuremath{ \mathbbm{1}_{#1} } }
\newcommand{\mbf}[1]{\ensuremath{ \mathbf{#1}} }
\newcommand{\mytilde}[1]{\tilde{ #1 }}
\title{Optimal  potential functions for the interacting particle system method} 
\abstract{The assessment of the probability of a rare event with a naive Monte-Carlo method is computationally intensive, so faster estimation or variance reduction methods are needed. We focus on one of these methods which is the interacting particle system (IPS) method. 
The method is not intrusive in the sense that the random Markov system under consideration is simulated with its original distribution, but 
selection steps are introduced that favor trajectories (particles) with high potential values. 
An unbiased estimator with reduced variance can then be proposed.
The method requires to specify a set of potential functions. The choice of these functions is crucial, because it determines the magnitude of the variance reduction. So far, little information was available on how to choose the  potential functions. This paper provides the expressions of the optimal potential functions minimizing the asymptotic variance of the estimator of the IPS method and it proposes recommendations for the practical design of the potential
functions.}
\keywords{Rare event simulation; interacting particle system; Sequential Monte-Carlo Samplers; Feymann-Kac particle filters}
\begin{document}

\section{Introduction}

The simulation of rare events in Markov systems has been intensively studied. 
Standard Monte Carlo simulations are usually prohibitive because too many simulations are needed to achieve the rare events. 
The general approach to speeding up such simulations is to favor the occurrence of the rare events by using importance sampling (IS).
In the IS strategy the system is simulated using a new set of  input and/or transition probability distributions, and unbiased estimates are obtained by multiplying the simulation output by a likelihood ratio \cite{rubinstein81}. 
The tricky part in IS is to properly choose the biased distributions in order to get a significant variance reduction.
Another difficulty is to run simulations with the biased dynamics, which is not always possible with big numerical codes in particular in industrial contexts.
Other promising biasing Monte Carlo methods have been studied, such as sampling importance resampling (SIR), 
sequential Monte Carlo (SMC), and multicanonical Monte Carlo (MMC) \cite{kunsch01,doucet01,berg00}.
All these methods involve biasing the distributions of some input variables to probe the tail of the pdf of the output variable of interest. 
The biased distributions are not assumed to be known a priori, but this knowledge is accumulated during the iterations. 
However, all these methods are intrusive. They require to modify the
numerical codes or the experiments in the sense that the distributions of the input random variables
have to be changed. 

Non-intrusive techniques have been proposed, that do not require to modify the distributions of the input variables. 
The principle of the Interacting Particle System (IPS) method  is to substitute for the bias of the input random variables selection steps based 
on the output variable.
It was first introduced in \cite{del2004feynman}, and with an alternative formulation in \cite{del2006sequential}.
The algorithm simulates the Markov system with selection and mutation steps and 
it can give an unbiased estimator of the probability of the rare event \cite{del2005genealogical}.
The IPS methodology is closely related 
to a class of Monte Carlo acceptance/rejection simulation techniques introduced in the fifties and nowadays used 
in many applications \cite{doucet01}.

The IPS algorithm generates a system of particles  by  a sequence of selection  and mutation steps.
The selection steps of the IPS algorithm are determined by potential functions.
As we will see below, particles with low potential values are killed while those with high potential values are multiplied during the selection steps.
When these potential functions are properly tuned, the IPS method can yield an estimator with a smaller variance than the Monte-Carlo estimator.
The main purpose of the IPS method is to provide such a variance reduction without modifying the dynamics of the process. Indeed, as we will see below, the mutation steps of the IPS algorithm  are carried out with the original Markov dynamics.
If we can also modify the dynamics of the process, then it is theoretically possible to reach a zero variance
and it is practically possible to design an iterative algorithm that approximates the optimal dynamics \cite{guarniero2017iterated}. 
Here we want to consider the case where the dynamics of the process is left untouched, because it is not always possible to modify the dynamics of the system in complex numerical codes or in real experiments,
as seen for instance in \cite{garnierdelmo06} where the numerical code is imposed. 
We consider such a framework in this paper (as we will discuss in the conclusion
a particularly favorable context is the multifidelity Monte Carlo framework \cite{multifidelity}).
The tricky part in IPS is the choice of the potential functions that determine the selection steps \cite{del2005genealogical,garnierdelmo06,wouters2016rare,jacquemart2016tuning,morio2013optimisation}.
In this paper, we consider the IPS method and we determine the optimal potential functions.

Let $(Z_k)_{0 \leq k\leq n}$ be a Markov chain with values in the measurable spaces $(E_k,  \mathcal E_k)$, with initial law $\nu_0$, and with a kernel $Q_k$ such that for $k>0$ and for   any bounded measurable function $h:E_k\to \mathbb{R}$
\begin{equation}
\mathbb E [ h(Z_k)  |Z_{k-1} ] =\int_{E_k} h(z_k) Q_k(dz_k|Z_{k-1}).
\end{equation}
For any bounded measurable function  $h:E_0\times \cdots \times E_n\to \mathbb{R}$ we have:
\begin{align}
\mathbb E [ h(Z_0,\ldots,Z_n) ] 
= \int_{E_n \times \cdots \times E_0} h(z_0,\ldots,z_n) 
Q_n(dz_n|z_{n-1})  \cdots Q_1(dz_1|z_0) \nu_0(dz_0) .
\end{align}
 Let $\mbf Z_k=(Z_0, Z_1, \dots ,Z_k)$ be a trajectory of size $k$, and let $\mbf E_k= E_0\times E_{ 1} \times \dots \times E_k $ be the set of trajectories of size $k$ that we equip with the product $\sigma$-algebra $\boldsymbol{\mathcal E}_k=\mathcal E_0 \otimes \mathcal E_1\otimes\dots \otimes \mathcal E_k$. For $i<j$, when it is necessary to differentiate  the coordinates of the trajectories $\mbf z_i$ and $ \mbf z_j$ we write the coordinates $z_{i,k}$ for $k\leq i$ and $z_{j,k}$ for $k\leq j$ such that $\mbf z_i=(z_{i,0},z_{i,1},\dots,z_{i,i} )$ and $\mbf z_j=(z_{j,0},z_{j,1},\dots,z_{j,j} )$. We write $\mbf z_k=(z_0,\ldots,z_k)$ when there is no ambiguity.
We introduce the Markov chain of the trajectories  $(\mbf Z_k)_{0 \leq k\leq n}$   with  values in  the measurable spaces $(\mbf E_k,\boldsymbol{\mathcal E}_k)$, and with the transition kernels $M_k$ such that:
\begin{align}
M_k(d{\bf z}_k|{\bf z}_{k-1})  = \delta_{{\bf z}_{k-1}}\big( d (z_{k,0},\,\ldots \, , z_{k,k-1}) \big)
Q_k(dz_{k,k} |z_{k-1,k-1}).
\end{align}
For any bounded measurable  function $h:\mbf E_n\to \mathbb R $ we have 
\begin{align}
\nonumber
  p_h&=\mathbb E\left[h(\mbf Z_n)\right] \\
  &=\int_{\mbf E_n\times\dots\times\mbf E_0 }h(\mbf z_n) \prod^n_{k=1}M_k(d\mbf z_k|\mbf z_{k-1}) \nu_0(d\mbf z_0). 
\end{align}
Rare event analyses are often carried out for reliability assessment where the system under consideration is modeled by the Markov chain of the trajectories  $(\mbf Z_k)_{0 \leq k\leq n}$  and 
we want to assess the probability of failure of the system. The system failure is charcaterized by a region $D$  of $\mbf E_n$ and we want to assess the probability of failure which happens when $\mbf Z_n$ enters $D$. 
Typically the system is reliable, which means that probability of failure $\mathbb P(\mbf Z_n\in D)$ is small.
In order to assess the probability of failure, we can take $h(\mbf z_n)=\indic D(\mbf z_n)$ so that $p_h=\mathbb P(\mbf Z_n\in D)$ and use the IPS method to get an estimation $\hat p_h$ of $p_h$. Although the main application of the IPS method is reliability assessment where it relates to the case $h=\indic{D}$, the result of this paper will be presented in its most general form, where $h$ is an arbitrary bounded measurable function.

The IPS method provides an estimator $\hat p_h$ of $p_h$.
As we have said, the choice of the potential functions $G_k:\mbf E_k \to [0,+\infty)$ for  $k\leq n-1$ used in the selection steps is critical 
because it determines the variance of the IPS estimator $\hat{p}_h$. 
So far, little information has been provided on the forms of  efficient potential functions. 
The standard approach is to find the best potential functions within a set of parametric potential functions. 
As a consequence the efficiency of the  method strongly depends on the quality of the chosen parametric family. 
For instance, in \cite{del2005genealogical} (where $E_s=\mathbb{R}$) the authors obtain their best variance reduction by choosing (remember that we denote $\mbf z_k=(z_0,\ldots,z_k)$)
$$
G_k(\mbf z_k)=\frac{\exp{\left[-\lambda V(z_k) \right]}}{\exp{\left[-\lambda V(z_{k-1}) \right]}}   ,
$$
 where $\lambda$ is a positive tuning parameter, and the quantity $V(z)=a-z$ roughly measures the proximity of $z$ to the critical region that was of the form $D=[a,+\infty)$. 
 In \cite{wouters2016rare} is is shown that it seems better to take a time-dependent proximity function $V_k$ instead of $V$, yielding: 
 $$
 G_k(\mbf z_k)=\frac{\exp{\left[-\lambda V_k(z_k) \right]}}{\exp{\left[-\lambda  V_{k-1}(z_{k-1}) \right]}},
 $$ 
 where the quantities $V_k(z)$ are again measuring the proximity of $z$ to $D$. 
 Once the set of parametric potential functions is chosen, it is necessary to optimize the tuning parameters of the potentials. 
 Different methods have been proposed. 
 In \cite{jacquemart2016tuning}, an empirical heuristic algorithm is provided;  in \cite{morio2013optimisation}   
  a meta model of the variance is minimized; 
  in  \cite{del2005genealogical} a large deviation principle is used as a guide. 
  One other common option for the potential functions is the one done in
 splitting methods. Indeed the splitting method  can also be seen as a version of the IPS method \cite{cerou2005limit}. 
 In this method one wants to assess the probability that a random variable $Z $  belongs to  a subset $B_n$.  
 A succession of nested sets $E=B_0\supseteq B_1\supseteq B_2\supseteq \dots \supseteq B_n $ is chosen by the practitioner 
 or possibly chosen in an adaptive manner \cite{cerou2007adaptive}.  
 One considers a sequence of random variables $(Z_i)_{i=1,\ldots,n}$  
  such that the small probability  $\mathbb P(Z \in B_n )$ can be decomposed into a  product of conditional probabilities: 
  $\mathbb P(Z\in B_n )=\prod_{i=1}^n\mathbb P(Z_i\in B_i|Z_{i-1}\in B_{i-1} )$ 
    and   $\mathbb P(Z\in B_n )=\mathbb E\left[h(\mbf Z_n)\right]$ by setting $h(\mbf z_n)= \indic{ B_n}( z_n)$. 
    In this method  the potential functions are chosen of the following form 
 $$
 G_k(\mbf z_k)=\indic{B_k}(z_k).
 $$ 
 One usually optimizes the variance reduction within this family of potential functions by optimizing the choice of the sets $(B_k)_{k\leq n}$.
 
 In this paper we tackle the issue of the choice of the potential functions. Our contribution is to provide the expressions of the optimal potential functions that minimize  the asymptotic variance of the IPS estimator. By showing there is a positive minimum to the variance reduction we confirm the empirical result  reported in the literature: the  maximal variance reduction reachable with the IPS method is bounded from below. 
 These expressions will hopefully lead the practitioners to design more efficient potential functions, that are close to the optimal ones. We
 give a few comments and recommendations based on the expressions of the optimal potential functions to guide the user in the choice of the potential functions.\\
 
  The rest of the paper is organized as follows. Section \ref{sec:IPS} introduces the IPS method. Section \ref{sec:optiG} presents  the potential functions that minimize  the asymptotic variance  of the IPS estimator. Section \ref{sec:applica} presents three examples of applications. Finally Section \ref{sec:disc} discusses the implications of our results.\\

  In the rest of the paper we use the following notations: We denote by $\mathscr M(A)$ the set of bounded measurable functions on a set $A$. If $f $ is a bounded measurable function, and $\eta$  is a measure  we note $\eta(f)=\int f\,d\eta$. If $M$ is a Markovian kernel, we denote by $M(f)$  the function such that  $M(f)(x)=\int f(y)M(dy|x)$, and for a measure $\eta$, we denote by $\eta M$  the measure such that
 \begin{equation}
  \eta M(f)=\int \int f(y)M(dy|x)\eta(dx).
  \end{equation}

\section{The IPS method}
\label{sec:IPS}

\subsection{A Feynman-Kac model}

The IPS method relies on a Feynman-Kac model \cite{del2004feynman} which  is defined in this subsection.
Let $(G_s)_{0 \leq s <n}$ be a set of potential functions $G_s : {\bf E}_s \to [0,+\infty)$
such that $\mathbb{E} [\prod_{s=0}^{n-1} G_s({\mbf Z}_s)]>0$.
For each $k< n$, we define a {target} probability measure $\tilde{\eta}_k$ on $\left( \mbf E_k,\boldsymbol{\mathcal E}_k\right)$, such that
$\forall B\in \boldsymbol{\mathcal E}_k$:
\begin{align}
 \tilde{\eta}_k(B) &=\frac{\mathbb E\left[\indic{B}(\mbf Z_{k}) \prod_{s=0}^k G_s(\mbf Z_{s})\right]}{ \mathbb E\left[  \prod_{s=0}^k G_s(\mbf Z_{s})\right]}. \label{eq:target} 
\end{align} 
For each $0 \leq k\leq n-1 $, we define the {propagated} target probability measure $\eta_{k+1}$ on $\left( \mbf E_{k+1},\boldsymbol{\mathcal E}_{k+1}\right)$ such that $\eta_{k +1}=\tilde{\eta}_{k}M_{k}$ and $\eta_0=\tilde{\eta}_0$. We have
$\forall B\in \boldsymbol{\mathcal E}_{k+1}$:
\begin{align}
 \eta_{k+1}(B) &=\frac{\mathbb E\left[\indic{B}(\mbf Z_{k+1}) \prod_{s=0}^k G_s(\mbf Z_{s})\right]}{ \mathbb E\left[  \prod_{s=0}^k G_s(\mbf Z_{s})\right]}. \label{eq:proptarget} 
\end{align}  
 Let $\Psi_k$ be the application that transforms a measure $\eta$ defined on $\mbf E_{k}$ into a measure $\Psi_k(\eta)$ defined on $\mbf E_{k}$ and such that \begin{equation}
  \Psi_k(\eta)(f)=\frac{\int G_k(\mbf z)f(\mbf z) \eta(d\mbf z) }{\eta(G_k)}.
\end{equation}
We say that $\Psi_k(\eta)$ gives the selection of $\eta$ through the potential $G_k$. Notice that $\tilde{\eta}_k$ is the selection of $\eta_k$ as $\tilde{\eta}_k=\Psi_k(\eta_k)$. The target distributions can therefore be built according to the following successive selection  and propagation steps:
$$\eta_k\overset{\Psi_k}{\xrightarrow{\mbox{\hspace{1cm}}}} \tilde{\eta}_k\overset{.M_k}{\xrightarrow{\mbox{\hspace{1cm}}}} \eta_{k+1}.$$

We also define the associated unnormalized measures $\tilde{\gamma}_k$ and $\gamma_{k+1}$, such that for $f\in\mathscr M(\mbf E_{k})$:
\begin{equation}
\begin{array}{l}
\displaystyle
 \tilde{\gamma}_{k}(f)  = \mathbb E\left[f(\mbf Z_{k})\prod_{s=0}^{k} G_s(\mbf Z_{s})  \right] , \\
\displaystyle
  \tilde{\eta}_k(f) =\frac{\tilde{\gamma}_{k}(f)}{\tilde{\gamma}_{k}(1)},
  \end{array}
   \label{eq:targetProp1}
\end{equation} and for $f\in\mathscr M(\mbf E_{k+1})$:
\begin{equation}
\begin{array}{l}
\displaystyle
 \gamma_{k+1}(f) =\mathbb E\left[f(\mbf Z_{k+1})\prod_{s=0}^{k} G_s(\mbf Z_{s})  \right]  ,\\
\displaystyle
  \eta_{k+1}(f)=\frac{\gamma_{k+1}(f)}{\gamma_{k+1}(1)}. 
  \end{array}
  \label{eq:targetProp2}
\end{equation}
Denoting $f_h(\mbf z)=\frac{h( \mbf z )}{\prod_{s=0}^{n-1} G_s(\mbf z_s) }$ if $\prod_{s=0}^{n-1} G_s(\mbf z_s)>0$ and $f_h({\mbf z})=0$ otherwise,  we have:
\begin{equation}
  p_h=\gamma_n(f_h)= \eta_n(f_h) \overset{n-1}{\underset{{k=0}}{\prod}} \eta_k\big(G_k \big )\label{eq:ph}.
\end{equation}

\subsection{The IPS algorithm and its estimator}

The IPS method provides an algorithm to generate  samples whose weighted empirical measures approximate 
the probability measures $\eta_{k}$ and $ \tilde{\eta}_k$  for each step $k$. 
These approximations are then used to provide an estimator of $p_h$. For the sample approximating $ \eta_{k }$, 
we denote $\mbf Z_{k}^j$ the $j^{th}$ trajectory and  $ W_{k}^j$ its weight. Similarly, in the sample approximating $ \tilde{\eta}_k$, 
we denote $\mytilde{\mbf Z}_{k}^j$  the $j^{th}$ trajectory and $\mytilde{W}_{k}^j$ its associated weight. 
For simplicity reasons, in this paper, we consider that the samples all contain $ N$ trajectories, 
but it is possible to modify the sample size at each step, as illustrated in \cite{lee2018variance}. 
The empirical  measure approximating   $\eta_{k}$ and $ \tilde{\eta}_k$  are denoted by $\eta_{k}^N$ and $ \tilde{\eta}_k^N$ and are defined by:
\begin{equation}
  \tilde{\eta}_k^{N}  = \sum_{i=1}^{N} \,\mytilde W_k^i\, \delta_{ \mytilde{\mbf Z}_{k}^i}  \quad\mbox{and}\quad
  \eta_k^{N}  = \sum_{i=1}^{N} \,W_k^i\, \delta_{ \mbf Z_{k}^i} \,.\label{eq:estiEta}
\end{equation}
So for all  $f\in \mathscr M( \mbf E_{k})$,
\begin{equation}
  \tilde{\eta}_k^{N} (f) = \sum_{i=1}^{N} \,\mytilde{W}_k^i\, f \big( \mytilde{\mbf Z}_{k}^i \big) \quad\mbox{and} \quad
  \eta_k^{N} (f) = \sum_{i=1}^{N} \,W_k^i\, f \big( \mbf {Z}_{k}^i \big)\,.
  \label{eq:estiEta2}
\end{equation}
By plugging these estimators into equations \eqref{eq:targetProp1} and \eqref{eq:targetProp2}, we get estimators for the unnormalized mesures. Denoting by $\tilde{\gamma}_k^{N}$ and  $\gamma_k^{N}$ these estimators, for all $f\in \mathscr M( \mbf E_{k})$, we have:
\begin{equation}
  \tilde{\gamma}_k^{ N } (f) = \tilde{\eta}_k^{N }(f) \prod_{s=0}^{k-1} \eta_s^{N }(G_s)
\end{equation}
and
\begin{equation}
  \gamma_k^{ N } (f) =\eta_k^{N }(f)          \prod_{s=0}^{k-1} \eta_s^{N}(G_s).
  \label{eq:estigamma}
\end{equation}
In particular if we apply \eqref{eq:estigamma} to the test function $f_h$, we get an estimator $\hat p_h$ of $p_h$ defined by:
\begin{equation}
\hat p_h = \eta_{n}^{N}(f_h) \overset{n-1}{\underset{{k=0}}{\prod}}  \eta_k^{N}\big(G_k \big ) . \label{eq:hatp}
\end{equation}

\begin{table}
\fbox{
\begin{tabular}{l}
  {\it Initialization:}\\
  $ k=0, \ \forall j =1,\ldots,N, \ \mbf Z_0^j\overset{i.i.d.}{\sim} \eta_0$, \\
  $ W_0^j=\frac 1 N $, and $\mytilde W_{0}^j=\frac{ G_0( \mbf Z_{0}^j)}{\sum_{s=1}^N  G_0( \mbf Z_{0}^s)}$ \\
 {\bf while} {$k<n$} {\bf do}\\
 \quad {\it Selection:}\\
 \quad $ (\tilde N_k^j)_{j=1,\ldots,N }\sim Mult\big(  N , (\mytilde W_{k}^j)_{j=1,\ldots,N }\big)$\\
  \quad $\forall j =1,\ldots,N ,\ \mytilde W_{k}^j = \frac 1 { N }$\\
 \quad  {\it Propagation:}\\
 \quad {\bf for} 
 {$j:=1,\ldots,N$} {\bf do}\\
 \quad \quad using the kernel $M_k$, continue the trajectory $\mytilde{\mbf{Z}}_{k}^j$ to get $ \mbf{Z}_{k+1}^j$\\
\quad \quad 	set $W_{k+1}^j=\mytilde W_k^j$ and $\mytilde W_{k+1}^j=\frac{W_{k+1}^j G_{k+1}( \mbf Z_{k+1}^j)}{\sum_{s} W_{k+1}^s G_{k+1}( \mbf Z_{k+1}^s)}$\\
  \quad {\bf if}
  {$\forall j,\ \mytilde W_{k+1}^j=0$} 
  {\bf then} \\
 \quad \quad {$\forall q>k$,  set $ \eta_q^N=\tilde \eta_q^N=0$ and Stop }\\
\quad  {\bf else}\\
  \quad \quad   {$k=k+1$}
  \end{tabular}
  	}
\caption{IPS algorithm.}
\label{table:algo1}
\end{table} 

The IPS algorithm builds the samples sequentially, alternating between a selection step and a propagation step.
The $k^{th}$ selection step transforms the  sample $(\mbf Z_k^j,W_k^j)_{j\leq N}$, into the sample 
$(\mytilde{\mbf Z}_k^j,\mytilde{W}_k^j)_{j\leq N}$. This transformation is done with a multinomial resampling scheme. This means that the $\mytilde{\mbf Z}_k^j$'s are drawn with replacement from the sample $ (\mbf Z_k^j)_{j\leq N}$, each trajectory $ \mbf Z_k^j$ having a probability $ \frac{W_{k}^j G_k( \mbf Z_{k}^j)}{\sum_{i=1}^N W_{k}^i G_k( \mbf Z_{k}^i)}$ to be drawn each time.  We let $\tilde N_k^j$ be the number of times the particle $ \mbf Z_k^j$ is replicated in the sample $(\mytilde{\mbf Z}_k^j,\mytilde{W}_k^j)_j$, so $ N =\sum_{j=1}^{N } \tilde N_k^j$. After this resampling the weights $\mytilde W_k^j$ are set to $ 1/N$.
The interest of this selection by resampling is that it discards low potential trajectories and replicates high potential trajectories. Thus, the selected sample focuses on trajectories that will have a greater impact on the estimations of the next distributions once extended. \\
If one specifies potential functions that are not positive everywhere, there can be a possibility that at a step $k$ we get $\forall j, G_k(\mbf Z_{k}^j)=0$. When this is the case, the probability for resampling cannot be defined, the algorithm stops, and we consider that $\forall s\geq k$ the measures $\tilde{\eta}_s^N$ and $ \eta_{s+1}^N$ are equal to the null measure.\\

Then the $k^{th}$ propagation step transforms the  sample $(\mytilde{\mbf Z}_k^j,\mytilde{W}_k^j)_{j\leq N}$, into the sample 
$(\mbf Z_{k+1}^j,W_{k+1}^j)_{j\leq N}$. Each trajectory $ \mbf Z_{k+1}^j $ is obtained by extending the trajectory  $ \mytilde{\mbf Z}_k^j $ 
one step further using the transition kernel $M_k$. The weights satisfy $W_{k+1}^j=\mytilde{W}_k^j ,\ \forall j$. 
Then the procedure is iterated until the step $n$. The full algorithm to build the samples is displayed in table \ref{table:algo1}.\\

 For $k<n$, we denote by $\hat{\mbf E}_k=\{\mbf z_k\in \mbf E_k, G_k(\mbf z_k)>0\}$ the support of $G_k$, and we denote $ \hat{\mbf E}_n=\{\mbf z_n\in \mbf E_n, h(\mbf z_n)>0\}$ the support of $h$. We will make the following assumption on the potential functions:
 \begin{equation}
   \exists\,\varepsilon >0,\quad \forall k\leq n,\ \forall \mbf z_{k-1}\in \hat{\mbf E}_{k-1},\ M_{k-1}( \hat{ \mbf E}_{k}|\mbf z_{k-1})>\varepsilon \label{eq:(G)}\tag{G} .
 \end{equation} 

\begin{theorem}
 When the potential functions satisfy \eqref{eq:(G)}, $\hat p_h$ is unbiased and strongly consistent. \label{consitant}
\end{theorem}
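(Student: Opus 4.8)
\emph{Proof proposal.}
The plan is to pass through the unnormalised particle measures $\gamma_k^N$ of \eqref{eq:estigamma}, for which $\hat p_h=\gamma_n^N(f_h)$ and $p_h=\gamma_n(f_h)$ by \eqref{eq:ph}, \eqref{eq:estigamma} and \eqref{eq:hatp}, and to prove the two facts (i) $\mathbb E\big[\gamma_k^N(f)\big]=\gamma_k(f)$ and (ii) $\gamma_k^N(f)\to\gamma_k(f)$ almost surely, for every $k\le n$, then to specialise to $f=f_h$. Both are classical for Feynman--Kac particle models with strictly positive potentials (\cite{del2004feynman,del2005genealogical}); the only new ingredient is to check that \eqref{eq:(G)} is exactly what lets them survive the possibility that some $G_k$ vanishes on the whole cloud and the algorithm stops.

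For unbiasedness I would induct on $k$, using the convention that all empirical measures are null once the algorithm has stopped. The base case $k=0$ is the Monte-Carlo identity for the i.i.d.\ initial sample. For the step, condition first on the cloud just after the $k$-th selection: the propagation draws the $\mbf Z_{k+1}^j$ independently from $M_k(\cdot\,|\,\mytilde{\mbf Z}_k^j)$ with unchanged weights, so $\mathbb E\big[\eta_{k+1}^N(f)\,\big|\,\cdot\big]=\tilde\eta_k^N(M_k f)$; condition next on the cloud just before the selection: the multinomial resampling is unbiased for the selection map, $\mathbb E\big[\tilde\eta_k^N(g)\,\eta_k^N(G_k)\,\big|\,\cdot\big]=\eta_k^N(gG_k)$, and this stays true on $\{\eta_k^N(G_k)=0\}$ because then both sides vanish (the weights being positive, $\eta_k^N(G_k)=0$ forces $G_k\equiv0$ on the cloud) and the stopping convention is consistent. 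Multiplying by $\prod_{s<k}\eta_s^N(G_s)$ and composing the two conditional expectations gives $\mathbb E\big[\gamma_{k+1}^N(f)\,\big|\,\mathcal F_k\big]=\gamma_k^N\big((M_kf)\,G_k\big)$, where $\mathcal F_k$ is the $\sigma$-field of the system up to the $k$-th selection; taking expectations and invoking the induction hypothesis yields $\mathbb E\big[\gamma_{k+1}^N(f)\big]=\gamma_k\big((M_kf)\,G_k\big)=\gamma_{k+1}(f)$, the last equality following from $M_k(f)(\mbf z)=\mathbb E\big[f(\mbf Z_{k+1})\,\big|\,\mbf Z_k=\mbf z\big]$ and \eqref{eq:targetProp2}. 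Taking $f=f_h$ gives $\mathbb E[\hat p_h]=p_h$ (a truncation argument absorbs the fact that $f_h$ need not be bounded, since $\gamma_n(|f_h|)=\mathbb E|h(\mbf Z_n)|<\infty$); note that $f_h$ is finite on every surviving trajectory because the $k$-th selection keeps only particles whose length-$k$ prefix lies in $\hat{\mbf E}_k$, and $\hat p_h=0$ on the stopping event.

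For strong consistency, \eqref{eq:(G)} enters first through the stopping event: the $k$-th selected cloud lies in $\hat{\mbf E}_k$, so by \eqref{eq:(G)} each of its $N$ propagated offspring lands in $\hat{\mbf E}_{k+1}$ with probability at least $\varepsilon$, independently, whence the algorithm stops at step $k+1$ with probability at most $(1-\varepsilon)^N$ and, by a union bound, fails to reach step $n$ with probability at most $n(1-\varepsilon)^N$; this being summable in $N$, Borel--Cantelli ensures the algorithm almost surely runs to step $n$ for all $N$ large enough. On that event a second induction on $k$ gives $\eta_k^N(f)\to\eta_k(f)$, $\tilde\eta_k^N(f)\to\tilde\eta_k(f)$ and $\gamma_k^N(f)\to\gamma_k(f)$ almost surely: each propagation is a conditionally i.i.d.\ sample to which the strong law applies (made rigorous, e.g., by the classical $L^p$ bounds for particle approximations together with Borel--Cantelli), each selection is an a.s.-consistent multinomial resampling, and the normalising factors converge to the positive limits $\eta_k(G_k)$. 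Taking $f=f_h$ gives $\hat p_h\to p_h$ almost surely.

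The real obstacle is not the Feynman--Kac bookkeeping, which is routine, but isolating the role of \eqref{eq:(G)}: one must see (a) that the exact telescoping identity underlying unbiasedness is untouched by vanishing potentials once the stopping convention is in force, and (b) that \eqref{eq:(G)} forces the extinction probability to decay geometrically in $N$, which is precisely the input a Borel--Cantelli argument needs to carry the classical almost-sure convergence over to the present, possibly degenerate, setting. A secondary point requiring care is that $f_h=h/\prod_{s<n}G_s$ be a legitimate test function for $\gamma_n^N$ and $\gamma_n$, which follows from the support structure of the selection steps and boundedness of $h$.
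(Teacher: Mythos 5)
The paper does not prove this theorem itself but defers to \cite{del2004feynman}, Chapter 7, and your sketch reproduces exactly that standard argument: unbiasedness via the telescoping conditional-expectation identity for the unnormalised measures $\gamma_k^N$ (preserved on the extinction event by the null-measure convention), and strong consistency via the accessibility condition \eqref{eq:(G)}, which makes the extinction probability decay geometrically in $N$ so that a Borel--Cantelli argument reduces the problem to the classical $L^p$-bound particle estimates. Your outline is correct and correctly isolates the role of \eqref{eq:(G)}.
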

The proof of theorem \ref{consitant} can be found in \cite{del2004feynman} chapter 7.
\begin{theorem} 
When the potential functions are positive-valued, we have
\begin{equation}
\sqrt N (\hat p_h- p_h)\overset{d}{\underset{N\to\infty}{\longrightarrow}} \mathcal N\big(0,\sigma^2_{IPS,G}\big),
\end{equation}
where
\begin{align}
\sigma^2_{IPS,G}& =\sum_{k=0}^{ n } \left\{\mathbb E \bigg[ \prod_{i=0}^{k-1} G_i (\mbf Z_{i})\bigg]  
\mathbb E \bigg[ \mathbb E[h(\mbf Z_{n})|\mbf Z_{k} ]^2\, \prod_{s=0}^{k-1} G_s^{-1 }(\textbf Z_{s})\bigg]-  p_h^2\right\},
\label{eq:var}
\end{align}
 with the convention $\overset{-1}{\underset{i=0}{\prod}}  G_i (\mbf Z_{i})=\overset{-1}{\underset{i=0}{\prod}} G_i^{-1} (\mbf Z_{i})=1$.
\end{theorem}
A proof of this Central Limit Theorem (CLT) can be found in \cite{del2004feynman} chapter 9 or \cite{del2005genealogical}. This CLT is an important result of the particle filter literature. The non-asymptotic fluctuations  of the particle filters have been studied in \cite{cerou2011nonasymptotic}. Recently two weakly consistent estimators of the asymptotic variance $\sigma^2_{IPS,G}$,  that are based on a single run of the method,  have been proposed in \cite{lee2018variance}. One of these estimators is closely related to the one that was proposed in \cite{chan2013general}. 
 
\section{The optimal potential functions}
 \label{sec:optiG}
Here we aim at estimating $p_h$.
The choice of the potential functions has an impact on the variance of the estimation, so we would like to find  potential functions that minimize  the asymptotic variance \eqref{eq:var}. This choice depends on the function $h$. Also, note that if potential functions $G_k$ and $G'_k$ are such that $G_k = a_k G'_k$ with $a_k>0$, then they yield the same variance: $\sigma^2_{IPS,G}=\sigma^2_{IPS,G'}$. 
If potential functions $G_k$ and $G'_k$ are such that $G_k (\mbf Z_k)= G'_k(\mbf Z_k)$  a.s., then they also yield the same variance.
Therefore all potential functions will be defined up to a multiplicative constant $\nu_k$ a.s., where $\nu_k$ is the distribution of $\mbf Z_k$.
 
\begin{theorem}
\label{thm3}
For $k=0,\ldots,n-1$, let $g_k :\mbf E_{k}\to [0,+\infty)$ be defined by:
\begin{equation}
\label{def:gk}
g_k(\mbf z_{k}) = \mathbb E  \Big[   \mathbb E\big[ h(\mbf Z_{n}) \big|\mbf Z_{k+1}\big]^2  \big|  \mbf Z_{k} = \mbf z_{k}  \Big]  .
\end{equation}
For $k=0,\ldots,n-1$, let $G^{*}_k :\mbf E_{k}\to  [0,+\infty)$ be defined by:
\begin{align}
 G^{*}_k(\mbf z_{k}) = \sqrt{ \frac{g_{k}(\mbf z_k)}{ g_{k-1}(\mbf z_{k-1})}}
 \label{eq:optiG1}
\end{align}
with $\mbf z_{k-1}=(z_{k,0},\ldots,z_{k,k-1})$, if $g_{k-1}(\mbf z_{k-1}) >0$,
and $ G^{*}_k(\mbf z_{k}) = 0 $ otherwise. We use the convention $g_{-1}=1$.

The potential functions minimizing $\sigma^2_{IPS,G}$ are the $G^*_k $'s, $k\leq n-1$. 
They are unique in the sense that they are defined up to a multiplicative constant $\nu_k$ a.s..

The optimal variance of the IPS method with $n$ steps is then 
\begin{align}
\sigma^2_{IPS,G^*} = &
 \mathbb E \left[  \,  \mathbb E\big[ h(\mbf Z_{n})\big|\mbf Z_{0}\big]^2 \right]-  p_h^2\nonumber\\
&+\sum_{k=1}^{ n } \left\{\mathbb E \left[\sqrt{\mathbb E\Big[ \  \mathbb E\big[ h(\mbf Z_{n})\big|\mbf Z_{k}\big]^2\big|\mbf Z_{k-1}\Big]}\right]^2-  p_h^2\right\}.
\label{eq:varOpti}
\end{align}

\end{theorem} 

\begin{proof}
 As we lack mathematical tools to minimize  $\sigma^2_{IPS,G}$ over the set of potential functions $(G_k)_{k < n}$, the strategy is the following one:
First we guess the expressions \eqref{eq:optiG1} by a heuristic (and greedy) argument in the spirit of  the one-step-ahead minimization method 
used in \cite{douc09,gerber19} for instance.
Second we provide the proof that \eqref{eq:optiG1} indeed minimizes the asymptotic variance of the IPS estimator.
 
 Assuming that we already know the $k-2$ first potential functions, we start by trying to find the $k-1$-th potential function $G_{k-1}$ that minimizes the $k$-th term of the sum in \eqref{eq:var}. This is equivalent to minimize  the quantity \begin{equation}
   \mathbb E \bigg[ \prod_{i=0}^{k-1} G_i (\mbf Z_{i})\bigg]  \mathbb E \bigg[\mathbb E \big[ \mathbb E[h(\mbf Z_{n})|\mbf Z_{k} ]^2\big|\mbf Z_{k-1} \big]\, \prod_{s=0}^{k-1} G_s^{-1 }(\textbf Z_{s})\bigg] \label{eq:16} 
 \end{equation}
 over $G_{k-1}$.
As the $G_{k-1}$ are equivalent up to a multiplicative constant, 
we simplify the equation by choosing a multiplicative constant so that 
$\mathbb E \bigg[ \prod_{i=0}^{k-1} G_i (\mbf Z_{i})\bigg] =1$. 
Our minimizing problem then becomes the minimization of \eqref{eq:16} under the constraint
 $ \mathbb E \bigg[ \prod_{i=0}^{k-1} G_i (\mbf Z_{i})\bigg]=1$. 
 In order to be able to use a Lagrangian minimization we temporarily assume that the distribution of $\mbf Z_{k-1}$
  is discrete and that $\mbf Z_{k-1}$  takes its values in a finite or numerable set $\textbf E_{k-1}$. 
  For $\textbf z\in \textbf E_{k-1}$, we denote $a_{\textbf z}=\mathbb P(\mbf Z_{k-1}=\textbf z)$,
  $ g_{\textbf z}= \mathbb E \big[ \mathbb E[h(\mbf Z_{n})|\mbf Z_{k} ]^2\big|\mbf Z_{k-1}={\textbf z} \big]$ 
and $d_{\textbf z}=\prod_{i=0}^{k-2} G_i (\mbf z_{i})G_{k-1} ({\textbf z})$ (with $\mbf z_i=(z_0,\ldots,z_i)$). 
The minimization problem becomes the minimization of 
\begin{equation}
 \mathcal L= \left(\sum_{ \textbf z \in \textbf E_{k-1} }\frac{p_{\textbf z} g_{\textbf z}}{d_{\textbf z}} \right) -\lambda \left(1-\sum_{{\textbf z}\in{\textbf E}_{k-1}}p_{\textbf z} d_{\textbf z} \right).
\end{equation} 
Finding the minimum of this Lagrangian  we get that $d_{\textbf z}=\frac{\sqrt{g_{\textbf z}}}{\sum_{{\textbf z^\prime}\in{\textbf E}_{k-1}}p_{\textbf z^\prime}\sqrt{g_{\textbf z^\prime}}}$.
Now relaxing the constraint of the multiplicative constant, we get that  
$$
\prod_{i=0}^{k-1} G_i (\mbf z_{i}) \propto \sqrt{\mathbb E \big[ \mathbb E[h(\mbf Z_{n})|\mbf Z_{k} ]^2\big|\mbf Z_{k-1} =\mbf z \big]} =\sqrt{g_{k-1}(\mbf z)},
$$
which gives the desired expressions. After these heuristic arguments we can now  rigorously check that these expressions, obtained by minimizing each of the term of sum in \eqref{eq:var} one by one, also minimize  the whole sum  for any distribution of the $\mbf Z_{k-1}$'s. \\

The proof now consists in showing that, for any set of positive-valued  potential functions $(G_s)$, we have $\sigma^2_{IPS,G}\geq \sigma^2_{IPS,G^*} $. This is done by bounding from below each term of the sum in \eqref{eq:var}. 

We denote by $supp  \prod_{s=0}^{k-1} G^{*}_s$, i.e., the support of the function $ \mathbf z_{k-1}\mapsto \prod_{s=0}^{k-1} G^{*}_s(\mathbf z_s) $,
with the convention $\mbf z_s=(z_{k-1,0},\ldots,z_{k-1,s})$ in the product. 
We start by decomposing a product of potential functions as follows: $  \forall k\in\{1,\dots,n\}$,
\begin{align}
\prod_{s=0}^{k-1} G_s(\mbf z_{s}) &= \epsilon_{k-1}(\mbf z_{k-1})\prod_{s=0}^{k-1} G^{*}_s(\mbf z_{s})  +\bar\epsilon_{k-1}(\mbf z_{k-1})  
\label{eq:bhjk} 
\end{align} 
where
 $$
 \epsilon_{k-1}(\mbf z_{k-1})=\dfrac{\prod_{s=0}^{k-1} G_s(\mbf z_{s})}{\prod_{s=0}^{k-1} G^{*}_s(\mbf z_{s})}\ \mbox{and}\ \bar\epsilon_{k-1}(\mbf z_{k-1})=0
 $$ 
 if $ \mbf z_{k-1}\in supp \prod_{s=0}^{k-1} G^{*}_s$, and
$$
\epsilon_{k-1}(\mbf z_{k-1})=0\ \mbox{and}\ \bar\epsilon_{k-1}(\mbf z_{k-1})=\prod_{s=0}^{k-1} G_s(\mbf z_{s}),
$$
if $ \mbf z_{k-1}\notin supp \prod_{s=0}^{k-1} G^{*}_s$.
Using \eqref{eq:bhjk}  we get that
\begin{align}
 \mathbb E &\bigg[ \ \prod_{s=0}^{k-1} G_s (\textbf Z_{s})\bigg]  \mathbb E \bigg[ \mathbb E[h(\textbf Z_{n})|\textbf Z_{k} ]^2\, \prod_{s=0}^{k-1} G_s^{-1 }(\textbf Z_{s})\bigg] \nonumber \\
\nonumber 
    = &\  \mathbb E \bigg[ \epsilon_{k-1}(\mbf Z_{k-1})\prod_{s=0}^{k-1} G^{*}_s (\textbf Z_{s})\bigg]
    \mathbb E \bigg[ g_{k-1}  ( \textbf Z_{k-1} ) \prod_{s=0}^{k-1} G_s^{-1 }(\textbf Z_{s})\bigg]  \nonumber \\
  \nonumber   & +  \mathbb E \bigg[ \bar\epsilon_{k-1}(\mbf Z_{k-1}) \bigg]
     \mathbb E \bigg[ g_{k-1}  ( \textbf Z_{k-1} ) \prod_{s=0}^{k-1} G_s^{-1 }(\textbf Z_{s})\bigg] \nonumber \\
 \geq 
  &\   \mathbb E \bigg[ \epsilon_{k-1}(\mbf Z_{k-1})\prod_{s=0}^{k-1} G^{*}_s (\textbf Z_{s})\bigg]
    \mathbb E \left[ \frac{g_{k-1}  ( \textbf Z_{k-1} )}{\prod_{s=0}^{k-1} G_s (\textbf Z_{s})}\right]+0  . \label{eq:njin}
\end{align}
For $ \mbf z_{k-1}\in supp \prod_{s=0}^{k-1} G^{*}_s$ we have: 
$$ 
\prod_{s=0}^{k-1} G^{*}_s(\mbf z_{s})\propto \sqrt{g_{k-1}( \mbf z_{k-1})} . 
$$
Consequently  $supp\, g_{k-1}  = supp \prod_{s=0}^{k-1} G^{*}_s$ 
and we get 
\begin{align} 
\nonumber
& \mathbb E \left[ \frac{g_{k-1}  ( \textbf Z_{k-1} ) }{\prod_{s=0}^{k-1} G_s (\textbf Z_{s})}\right] 
  =\mathbb E \left[ \frac{
 g_{k-1}  ( \textbf Z_{k-1} ) }{\epsilon_{k-1}(\mbf Z_{k-1})\prod_{s=0}^{k-1} G_s^{*} (\textbf Z_{s})}\right] \nonumber\\
 &=\mathbb E \left[ \frac{1} {\epsilon_{k-1}(\mbf Z_{k-1}) }\prod_{s=0}^{k-1} G_s^* (\textbf Z_{s})\right] \label{eq:dcvb}.
\end{align}
Combining \eqref{eq:dcvb} with inequality \eqref{eq:njin} we obtain that
\begin{align}
 \mathbb E &\bigg[ \ \prod_{s=0}^{k-1} G_s (\textbf Z_{s})\bigg]  \mathbb E \bigg[ \mathbb E[h(\textbf Z_{n})|\textbf Z_{k} ]^2\, \prod_{s=0}^{k-1} G_s^{-1 }(\textbf Z_{s})\bigg] \nonumber \\
\nonumber
\geq 
    &   \mathbb E \bigg[ \epsilon_{k-1}(\mbf Z_{k-1})\prod_{s=0}^{k-1} G^{*}_s (\textbf Z_{s})\bigg] 
     \mathbb E \left[ \frac{1} {\epsilon_{k-1}(\mbf Z_{k-1}) }\prod_{s=0}^{k-1} G^*_s (\textbf Z_{s})\right] 
\end{align}
and using the Cauchy-Schwarz inequality on the right term, we get that
 \begin{align}
 \mathbb E &\bigg[ \ \prod_{s=0}^{k-1} G_s (\textbf Z_{s})\bigg]  \mathbb E \bigg[ \mathbb E[h(\textbf Z_{n})|\textbf Z_{k} ]^2\, \prod_{s=0}^{k-1} G_s^{-1 }(\textbf Z_{s})\bigg] \nonumber \\
& \geq 
       \mathbb E \bigg[ \prod_{s=0}^{k-1} G^{*}_s (\textbf Z_{s})\bigg] ^2
    = \mathbb E \bigg[ \prod_{s=0}^{k-1} G^{*}_s (\textbf Z_{s})\bigg] \mathbb E \left[ \frac{g_{k-1}  ( \textbf Z_{k-1} )}{ \prod_{s=0}^{k-1} G^*_s (\textbf Z_{s})}\right]  \label{eq:bhvu}.
\end{align}
By summing the inequalities \eqref{eq:bhvu} with respect to $k$, we easily see that 
$$
\sigma^2_{IPS,G}\geq \sigma^2_{IPS,G^*}.
$$
The equality is achieved when $\epsilon_{k-1}(\mbf Z_{k-1})$ is constant $\nu_k$-a.s..
This completes the proof of the theorem.
\end{proof}

If  the optimal potential functions $G^*_k$ are positive-valued (i.e., if the functions $g_k$ defined by (\ref{def:gk}) are positive valued), 
then they satisfy the hypothesis under which the TCL was proven and
they can be used as potential functions for the IPS estimation method  and they give an estimator with 
the minimal asymptotic variance (\ref{eq:varOpti}). This is the case for all the examples addressed in the next section.
If the optimal potential functions $G^*_k$  are not positive everywhere, then they violate the hypothesis under which the TCL was proven in \cite{del2004feynman}. 
One could revisit the proof to extend its validity, in order to get a central limit theorem with weaker conditions \cite{chopin04,kunsch05}.
Here we simply state that, if the optimal potential functions $G^*_k$  are not positive everywhere,
then Theorem \ref{thm3} can be understood in the following way: Eq.~(\ref{eq:varOpti}) gives the infimum value 
of the asymptotic variance of the IPS estimator amongst all acceptable potential functions.
This value can be reached by a minimizing sequence of potential functions that are of the form $G_k=\max (G_k^*,\varepsilon)$, $k\leq n-1$, for $\varepsilon \searrow 0$.

\section{Numerical illustrations}
\label{sec:applica}
In this section we apply the IPS method to toy models. The first model under consideration  in section  \ref{subseq:second} is a Gaussian random walk,
that has been studied several times in the literature \cite{del2005genealogical,jacquemart2016tuning}. 
In section \ref{subseq:third} we consider an Ornstein-Uhlenbeck process.
In sections \ref{subseq:fifth}-\ref{subsec:sixth} we consider a more complex diffusion process.

\subsection{First example}\label{subseq:second}
We consider the Gaussian random walk $Z_{k+1} = Z_{k} + \varepsilon_{k+1}$,
$Z_0 = 0$, where the $\varepsilon_{k}$ are i.i.d. Gaussian random variables with mean zero
and variance one.
The goal is to compute the probability that $Z_n$ exceeds a large  positive value $a$.
Therefore we take $h(\mbf z_n)=\indic{[a,+\infty)}(z_n)$ so that
 \begin{equation}p_h =\mathbb P(Z_n\geq a).
 \end{equation}
In this case $ \mathbb E[h(\mbf Z_n)|\mbf Z_k=\mbf z_k]$ can be expressed in terms of the cdf $\Phi$ of the standard normal distribution:
\begin{align*}
\mathbb E[h(\mbf Z_n)| \mbf Z_k=\mbf z_{k } ] 
=  \Phi\Big( \frac{z_{k}-a}{\sqrt{n-k}}\Big) 
\end{align*}
The optimal potential functions satisfy
\begin{align*}
\nonumber
  \prod_{i=0}^{k-1}G_i^*(\mbf z_i)^2 &= \mathbb E\Big[\, \mathbb E\big[ h(\mbf Z_{n})\big|\mbf Z_{k}\big]^2\big|\mbf Z_{k-1} = \mbf z_{k-1} \Big] \\
  & =\Psi \big( z_{k-1}-a  ; n-k\big)
\end{align*}
where 
\begin{align}
\Psi(z;\rho) &= \frac{1}{\sqrt{2\pi}} \int_{\mathbb{R}} \Phi\Big( \frac{z -z'}{\sqrt{\rho}} \Big)^2 \exp\big( - \frac{{z'}^2}{2} \big) dz'.
\end{align}

Therefore, the optimal potential functions  are given by
\begin{align}
&
G_0^*(\mbf z_0)= \Psi \big( z_{0}-a ; n-1\big)^{1/2},
\label{def:Gstar0}
\end{align}
and for $1\leq k \leq n-1$
\begin{align}
&
G_k^*(\mbf z_k)=
\frac{
\Psi \big(  z_{k}-a ; n-k-1\big)^{1/2} 
}
{
\Psi \big( z_{k-1}-a ; n-k\big)^{1/2}
}
.
\label{def:Gstark}
\end{align}
The Chernov-Bernstein's inequality   
gives the rather 
sharp exponential bound: 
\begin{align*}
\mathbb E[h(\mbf Z_n)| \mbf Z_k=\mbf z_{k } ] 
 \leq \exp\left[-\frac{(a-z_{k })_+^2}{2(n-k)}\right],
\end{align*}
from which we can deduce that: 
\begin{align*}
\nonumber
  \prod_{i=0}^{k-1}G_i^*(\mbf z_i)^2 
&  \leq
 \exp\left[-\frac{(a-z_{k-1})_+^2}{2}\right]
+
\exp\left[- \frac{\left(a-z_{k-1}\right)^2}{n-k+2} \right]  .
\end{align*} 
By taking inspiration of this upper bound, we can propose to use the following explicit potential functions 
that turn out to be good approximations of the exact optimal potential functions $G^*_k$: 
 for $1\leq k \leq n-1$,
\begin{align}
G_k(\mbf z_k)& = \exp\left[- \frac{\left(z_{k}-a\right)^2}{2(n-k+1)}+ \frac{\left(z_{k-1}-a\right)^2}{2(n-k+2)} \right] 
\label{eq:Gex2a}
\end{align}
and
\begin{align}
G_0(\mbf z_0) & = \exp\left[- \frac{\left(z_{0}-a\right)^2}{2(n+1)}\right].
\label{eq:Gex2b} 
\end{align} 


In order to confirm the theoretical predictions, we have carried out a simulation study. We have run the method $1000$ times with $N=2 000$, $n=10$, different values of $a$, and different potential functions.
For each case we have computed the empirical mean and variance of the estimations. 
We have first used the potentials defined in equations (\ref{eq:Gex2a}-\ref{eq:Gex2b}). The results are displayed in table \ref{tab:res3}. We compare  these estimations  with the actual values of $p_h$ and the asymptotic variance of the Monte-Carlo method
which is simply $p_h$, showing that the potentials (\ref{eq:Gex2a}-\ref{eq:Gex2b}) built with the Chernov-Bernstein large deviation inequality yield a significant variance reduction. Indeed the variance reduction  compared to the Monte-Carlo method is at least by a factor $500$ and  at  best by a factor $5 \, 10^8$.

\begin{table*}[ht]
\centering
{\small
\begin{tabular}{|c|c|c|c|c|}
\toprule
\midrule
$a$  &$p_h$  &  $\sigma^2_{MC}$ & 
$mean(\hat p_h)$ & $\hat \sigma^2_{IPS,G}$ \\
\midrule
 $4\sqrt{n}$& $3.2 \, 10^{-5\ }$  & $3.2 \, 10^{-5\ }$ & 
 $3.2 \, 10^{-5\ }$ & $6.5 \, 10^{-8\ }$\\
 $5\sqrt{n}$& $2.9 \, 10^{-7\ }$ & $2.9 \, 10^{-7\ }$ & 
 $2.9 \, 10^{-7\ }$ & $1.3 \, 10^{-11}$\\
 $6\sqrt{n}$& $9.9 \, 10^{-10}$ & $9.9 \, 10^{-10}$ & 
 $1.0 \, 10^{-9}$ & $7.4 \, 10^{-16}$\\
 $7\sqrt{n}$& $1.3 \, 10^{-12}$  & $1.3 \, 10^{-12}$ & 
 $1.2 \, 10^{-12}$ & $2.6 \, 10^{-21}$\\
 \midrule
 \bottomrule
\end{tabular} 
}
\caption{Theoretical and empirical comparisons (example 1) with the potentials (\ref{eq:Gex2a}-\ref{eq:Gex2b}). The
results are obtained with $N=2 000$ and $n=10$.  The IPS algorithm has been run $1000$ times to get the empirical values
$mean(\hat p_h)$ and $\hat \sigma^2_{IPS,G^*}$.}
\label{tab:res3}
\end{table*} 

We have also compared the efficiencies of different potentials.
We have run the method $1000$ times with $N=2 000$, $n=10$, $a=15$, and different potential functions:\\
1)
the potential that selects the best values $ G_k(\mbf z_k)=\exp\left[- \alpha z_k \right]$ (we have determined that the optimal parameter $\alpha$ is  $\alpha=0.22$); \\
2) the potential used on a Gaussian random walk in \cite{jacquemart2016tuning} that selects the particles with the best increments: $ G_k(\mbf z_k)=\exp\left[\alpha(z_k-z_{k-1}) \right]$ 
(we have determined that the optimal parameter $\alpha$ is  $\alpha=1.4$);\\ 
 3) the potential (\ref{eq:Gex2a}-\ref{eq:Gex2b}) built with the Chernov-Bernstein large deviation inequality; \\
 4) the optimal  potential $(G_k^*)_{k<n}$ defined by (\ref{def:Gstar0}-\ref{def:Gstark})
 that we can compute using Gauss-Hermite quadrature formulas. \\
 The results  displayed in table \ref{tab:res4} show that \\
 1) By comparing $mean(\hat p_h)$  with $p_h$, we can see that  the estimator is unbiased whatever the choice of the potential.\\
2) By comparing the empirical variance multiplied by $N$ of the IPS estimator $\hat{\sigma}^2_{IPS,G}$, which is close to the asymptotic variance of the IPS estimator $\sigma^2_{IPS,G}$, to the asymptotic variance of the standard Monte Carlo estimator $\sigma^2_{MC}$, which is simply $p_h$, 
we can see that the IPS method provides a significant variance reduction, as the variance can be reduced by a factor $10^4$ with the optimal potential. \\
3) By comparing the two potentials parameterized with $\alpha$, 
we conclude that it is important to take into account the increments in the selection process and not only the values, because the minimal variance obtained with a potential of the form $ G_k(\mbf z_k)=\exp\left[- \alpha z_k \right]$ (the minimal variance is obtained with $\alpha=0.22$)
is $17$ times larger than the minimal variance obtained with a potential of the form $ G_k(\mbf z_k)=\exp\left[\alpha(z_k-z_{k-1}) \right]$
(the minimal variance is obtained with $\alpha=1.4$).\\
4)
The optimal potential functions $(G_k^*)_{k<n}$
 yield the best variance but the potential (\ref{eq:Gex2a}-\ref{eq:Gex2b}) 
 has almost the same performance, and the potential $ G_k(\mbf z_k)=\exp\left[\alpha(z_k-z_{k-1}) \right]$ with the tuned parameter $\alpha=1.4$
 has also a similar performance.

 \begin{table}[ht]
\centering
{\small
\begin{tabular}{|c|c|c|}
\toprule
\midrule
 $G_k(\mbf z_k)$  & $mean(\hat p_h)$ & $\hat \sigma^2_{IPS,G}$\\
\midrule
 $\exp\left[\alpha z_k \right]$, $\alpha=0.14$
    & $1.05\, 10^{-6}$ & $5.6\, 10^{-9}$\\
 $\exp\left[\alpha z_k \right]$, $\alpha=0.16$
    & $1.03\, 10^{-6}$ & $3.7\, 10^{-9}$\\
 $\exp\left[\alpha z_k \right]$, $\alpha=0.18$
    & $1.03\, 10^{-6}$ & $3.5\, 10^{-9}$\\
 $\exp\left[\alpha z_k \right]$, $\alpha=0.20$
    & $1.08\, 10^{-6}$ & $2.9\, 10^{-9}$\\
    $\exp\left[\alpha z_k \right]$, ${\bf \boldsymbol{\alpha} =0.22}$
    & $1.03\, 10^{-6}$ & ${\bf 2.8\, 10^{-9}}$\\
 $\exp\left[\alpha z_k \right]$, $\alpha=0.24$
    & $1.08\, 10^{-6}$ & $3.3\, 10^{-9}$\\
 $\exp\left[\alpha z_k \right]$, $\alpha=0.26$
    & $1.09\, 10^{-6}$ & $ 3.6  \, 10^{-9}$\\
  $\exp\left[\alpha z_k \right]$, $\alpha=0.28$
    & $1.05\, 10^{-6}$ & $4.0\, 10^{-9}$\\
 $\exp\left[\alpha(z_k\text{-}z_{k\text{-}1}) \right]$, $\alpha=1.0$
    & $1.05\, 10^{-6}$ & $3.2\, 10^{-10}$\\
 $\exp\left[\alpha(z_k\text{-}z_{k\text{-}1}) \right]$, $\alpha=1.1$
    & $1.06\, 10^{-6}$ & $2.5\, 10^{-10}$\\
 $\exp\left[\alpha(z_k\text{-}z_{k\text{-}1}) \right]$, $\alpha=1.2$
    & $1.04\, 10^{-6}$ & $1.9\, 10^{-10}$\\
     $\exp\left[\alpha(z_k\text{-}z_{k\text{-}1}) \right]$, $\alpha=1.3$
    & $1.05\, 10^{-6}$ & $1.7\, 10^{-10}$\\
     $\exp\left[\alpha(z_k\text{-}z_{k\text{-}1}) \right]$, ${\bf \boldsymbol{\alpha} =1.4}$
    & $1.05\, 10^{-6}$ & ${\bf 1.7\, 10^{-10}}$\\
     $\exp\left[\alpha(z_k\text{-}z_{k\text{-}1}) \right]$, $\alpha=1.5$
    & $1.05\, 10^{-6}$ & $1.8\, 10^{-10}$\\
     $\exp\left[\alpha(z_k\text{-}z_{k\text{-}1}) \right]$, $\alpha=1.6$
    & $1.05\, 10^{-6}$ & $2.0\, 10^{-10}$\\
     $\exp\left[\alpha(z_k\text{-}z_{k\text{-}1}) \right]$, $\alpha=1.7$
    & $1.05\, 10^{-6}$ & $2.4\, 10^{-10}$\\
  $G_k(\mbf z_k)   $  Eqs.~(\ref{eq:Gex2a}-\ref{eq:Gex2b})
& $1.05 \, 10^{-6}$ & ${\bf 1.5 \, 10^{-10}}$\\
  $G_k^*(\mbf z_k)   $  Eqs.~(\ref{def:Gstar0}-\ref{def:Gstark})
    & $1.05 \, 10^{-6}$ & ${\bf 1.3 \,10^{-10}}$\\
 \midrule
 \bottomrule
\end{tabular} 
}
\caption{Comparisons of the efficiencies of different potentials (example 1).
The results are obtained for $N=2000$,  $n=10$, $a=15$ (so $p_h=1.05 \, 10^{-6}$ and $\sigma^2_{MC}=1.05 \, 10^{-6}$).
 The IPS algorithm has been run $1000$ times to get the empirical values
$mean(\hat p_h)$ and $\hat \sigma^2_{IPS,G}$.
}
\label{tab:res4}
\end{table} 

\subsection{Second example}\label{subseq:third}
Here we consider the Ornstein-Uhlenbeck process, that is the solution of the stochastic differential equation:
 \begin{equation}
dX_t =\sqrt{2} dW_t - X_t dt , \quad \quad X_0=0.
 \end{equation}
It is a Markov, Gaussian, mean-reverting, and ergodic process.
The stationary distribution is ${\cal N}(0,1)$ and the mean-reversion time is $1$.
If the process is observed at times $t_k = k \tau$, $k=0,\ldots,n$, with $\tau>0$, then the observed process $Z_k=X_{t_k}$ is 
a homogeneous Markov chain
with kernel
 \begin{align}
 \nonumber
Q(dz_k | z_{k-1}) =& \frac{1}{\sqrt{2\pi ( 1-e^{-2\tau})}} 
\exp\Big[ - \frac{(z_k - z_{k-1}e^{-\tau} )^2}{2( 1-e^{-2\tau})}\Big] dz_k  ,
 \end{align}
in other words the law of $ Z_{k}$ knowing $(Z_j)_{j\leq k-1}$ is Gaussian with mean $ Z_{k-1} e^{-\tau}$ and variance $1-e^{-2\tau} $.
The goal is to compute the probability that $X_{t_n}$ exceeds a large  positive value $a$. Therefore we take $h(\mbf z_n)=\indic{[a,+\infty)}(z_n)$ so that
$p_h =\mathbb P(Z_n\geq a).$
 In this case we have for $0\leq k\leq n-1$:
 \begin{align*}
\mathbb E[h(\textbf Z_n)| \mbf Z_k= \mbf z_k] &=
\frac{1}{\sqrt{2\pi(1- e^{ -  2(n\text{-}k) \tau})}}
\int_a^\infty \exp\left[\text{-}\frac{(z_{n}'\text{-} z_{k}e^{\text{-} (n\text{-}k) \tau})^2}{2(1- e^{ -  2(n\text{-}k) \tau})}\right]dz_n' \\
&= \Phi\Big( \frac{z_{k}e^{\text{-} (n\text{-}k) \tau} \text{-} a}{\sqrt{1- e^{ -  2(n\text{-}k) \tau}}}\Big) ,
\end{align*}
 and the optimal potential is: for $1\leq k \leq n-2$, 
 \begin{align}
&
G_k^*(\mbf z_k)=
\sqrt{\frac{\int_{\mathbb R} 
\Phi\Big( \frac{ z_{k+1}' e^{\text{-} (n\text{-}k\text{-}1) \tau} \text{-} a}{\sqrt{1- e^{ -  2(n\text{-}k\text{-}1) \tau}}}\Big)
^2\exp\left[\text{-}\frac{(z_{k+1}'\text{-} z_{k}e^{- \tau})^2}{2(1- e^{- 2 \tau})}\right]dz_{k+1}'  }
{\int_{\mathbb R} \Phi\Big( \frac{ z_{k}'e^{\text{-} (n\text{-}k) \tau} \text{-} a}{\sqrt{1- e^{ -  2(n\text{-}k) \tau}}}\Big)^2\exp\left[\text{-}\frac{(z_{k}'\text{-} z_{k\text{-}1}e^{- \tau})^2}{2(1- e^{- 2 \tau})}\right]dz_{k}' }}  ,
\label{eq:Gkstaroua}
\end{align}
and
 \begin{align}
&
G_{n-1}^*(\mbf z_{n-1})=
\sqrt{\frac{\int_{\mathbb R} 
{\bf 1}_{[a,+\infty)}( z_{n}') \exp\left[\text{-}\frac{(z_{n}'\text{-} z_{n-1}e^{- \tau})^2}{2(1- e^{- 2 \tau})}\right]dz_{n}'  }
{\int_{\mathbb R} \Phi\Big( \frac{ z_{n-1}'e^{\text{-} \tau} \text{-} a}{\sqrt{1- e^{ -  2 \tau}}}\Big)^2\exp\left[\text{-}\frac{(z_{n-1}'\text{-} z_{n\text{-}2}e^{- \tau})^2}{2(1- e^{- 2 \tau})}\right]dz_{n-1}' }}  ,
\label{eq:Gkstaroub}
\end{align}
 \begin{align}
&
G_0^*(\mbf z_0)=
\sqrt{ \int_{\mathbb R} 
\Phi\Big( \frac{ z_{1}' e^{\text{-} (n\text{-}1) \tau} \text{-} a}{\sqrt{1- e^{ -  2(n\text{-}1) \tau}}}\Big)
^2\exp\left[\text{-}\frac{(z_{1}'\text{-} z_{0}e^{- \tau})^2}{2(1- e^{- 2 \tau})}\right]dz_{1}'  }  .
\label{eq:Gkstarouc}
\end{align}

The Chernov-Bernstein's inequality   
gives the rather 
sharp exponential bound: 
\begin{align*}
\mathbb E[h(\mbf Z_n)| \mbf Z_k=\mbf z_{k } ] 
 \leq \exp\left[-\frac{(a-z_{k }e^{-(n-k)\tau})_+^2}{2(1-e^{-2(n-k)\tau})}\right].
\end{align*}
By taking inspiration of this upper bound, we can propose to use the following explicit potential functions 
that turn out to be good approximations of the exact optimal potential functions $G^*_k$: 
 for $1\leq k \leq n-1$,
\begin{align}
\nonumber
G_k(\mbf z_k) =& \exp\left[- \frac{\left(z_{k}e^{-(n-k)\tau} -a\right)^2}{2(1+e^{-2(n-k-1)\tau} - 2 e^{-2(n-k)\tau)} )}
\right.\\ &\left.
 + \frac{\left(z_{k-1}e^{-(n-k+1)\tau}-a\right)^2}{2(1+e^{-2(n-k)\tau}-2e^{-2(n-k+1)\tau})} \right] 
\label{eq:Gex2aou}
\end{align}
and
\begin{align}
G_0(\mbf z_0) & = \exp\left[- \frac{\left(z_{0}e^{-n\tau} -a\right)^2}{2(1+e^{-2(n-1)\tau} - 2 e^{-2n\tau})}\right].
\label{eq:Gex2bou} 
\end{align} 
The form of the quasi-optimal potential $G_k$ is instructive: we can observe that, as long as $(n-k)\tau \gg 1$, the potential is constant, which means that there is no selection in the early steps of the dynamics.
Selection is effective when the remaining time $(n-k)\tau$ is of order one, that is the mean-reversion time of the process $X_t$.
This result is actually general and can be extended to all ergodic, mean-reverting processes. Indeed, since $\mathbb E[h(\mbf Z_n)| \mbf Z_k=\mbf z_{k } ] $ is constant (and approximately equal to $\mathbb E[h(\mbf Z_n)]$) when the time difference between $k$ and $n$ is larger than the mean-reversion time, the selection pressure of the optimal selection scheme is low in the early steps. It is only high when the remaining time
 is of the order of the mean-reversion time.
This also explains why the potential $G_k(\mbf z_k) = \exp[ - \alpha(z_k-z_{k-1})]$ is reasonably efficient (i.e. much better than standard Monte Carlo) but not as efficient as the optimal potential for these cases (see below).

We compare the efficiencies of different potentials in tables \ref{tab:res5}-\ref{tab:res5c}  for different values of $t_n$.
It appears that the quasi-optimal potential (\ref{eq:Gex2aou}-\ref{eq:Gex2bou}) has always a performance that is close to the optimal potential $G_k^*$ defined by (\ref{eq:Gkstaroua}-\ref{eq:Gkstarouc}).
When $t_n$ is smaller than one, then the mean-reversion force is low and the situation is close to the random Gaussian walk. The potential $G_k(\mbf z_k) = \exp[ - \alpha(z_k-z_{k-1})]$ with the optimal $\alpha$ ($\alpha=3.5$ for $t_n=1$) has a performance that is not far from the one of the optimal potential $G_k^*$.
When $t_n$ is larger than one, then the mean reversion is strong and an efficient selection is concentrated on the last time steps.
This is what the optimal potential $G_k^*$ and the quasi-optimal potential (\ref{eq:Gex2aou}-\ref{eq:Gex2bou}) do and this explains why the variance reduction is then very strong. The potential $G_k(\mbf z_k) = \exp[ - \alpha(z_k-z_{k-1})]$ with the optimal $\alpha$ ($\alpha=2.5$ for $t_n=2$ and $\alpha=2$ for $t_n=3$) imposes a constant selection pressure throughout the history of the Markov process, it does much better than Monte Carlo
(by a factor $50$--$100$ in terms of variance), but not as good as the optimal or quasi-optimal potentials (by a factor $2$--$3$ in terms of variance). 

\begin{table}[ht]
\centering
{\small
\begin{tabular}{|c|c|c|c|c|}
\toprule
\midrule
$G_k(\mbf z_k)$  & $mean(\hat p_h)$ & $\hat \sigma^2_{IPS,G}$\\
\midrule
 $\exp\left[\alpha z_k \right]$, $\alpha=0.1$
    & $8.6\, 10^{-6}$ & $2.3\, 10^{-6}$\\
 $\exp\left[\alpha z_k \right]$, $\alpha=0.2$
    & $8.4\, 10^{-6}$ & $7.7\, 10^{-7}$\\
 $\exp\left[\alpha z_k \right]$, $\alpha=0.3$
    & $8.6\, 10^{-6}$ & $4.7\, 10^{-7}$\\
 $\exp\left[\alpha z_k \right]$, ${\bf \boldsymbol{\alpha} = 0.4}$
    & $8.6 \, 10^{-6}$ & ${\bf 3.2\, 10^{-7}}$\\
 $\exp\left[\alpha z_k \right]$, $\alpha=0.5$
    & $8.2\, 10^{-6}$ & $3.5\, 10^{-7}$\\
 $\exp\left[\alpha z_k \right]$, $\alpha=0.6$
    & $8.1 \, 10^{-6}$ & $5.4  \, 10^{-7}$\\
 $\exp\left[\alpha z_k \right]$, $\alpha=0.7$
    & $8.1 \, 10^{-6}$ & $8.7  \, 10^{-7}$\\
 $\exp\left[\alpha z_k \right]$, $\alpha=0.8$
    & $7.8 \, 10^{-6}$ & $1.5\, 10^{-6}$\\
 $\exp\left[\alpha(z_k\text{-}z_{k\text{-}1}) \right]$, $\alpha=2.0$
    & $8.6  \, 10^{-6}$ & $5.2 \, 10^{-8}$\\
 $\exp\left[\alpha(z_k\text{-}z_{k\text{-}1}) \right]$, $\alpha=2.5$
    & $8.5  \, 10^{-6}$ & $2.8 \, 10^{-8}$\\
 $\exp\left[\alpha(z_k\text{-}z_{k\text{-}1}) \right]$, $\alpha=3.0$
    & $8.4 \, 10^{-6}$ & $2.0 \, 10^{-8}$\\
     $\exp\left[\alpha(z_k\text{-}z_{k\text{-}1}) \right]$, ${\bf \boldsymbol{\alpha}=3.5}$
    & $8.5\, 10^{-6}$ & ${\bf 1.8  \, 10^{-8}}$\\
 $\exp\left[\alpha(z_k\text{-}z_{k\text{-}1}) \right]$, $\alpha=4.0$
    & $8.5\, 10^{-6}$ & $2.5 \, 10^{-8}$\\
 $\exp\left[\alpha(z_k\text{-}z_{k\text{-}1}) \right]$, $\alpha=4.5$
    & $8.5\, 10^{-6}$ & $3.8  \, 10^{-8}$\\
 $\exp\left[\alpha(z_k\text{-}z_{k\text{-}1}) \right]$, $\alpha=5.0$
    & $8.5\, 10^{-6}$ & $1.0  \, 10^{-7}$\\
 $G_k(\mbf z_k)$  Eqs.~(\ref{eq:Gex2aou}-\ref{eq:Gex2bou})
    &  $8.5\, 10^{-6}$ & ${\bf 1.5\, 10^{-8}}$\\
  $G_k^*(\mbf z_k)   $  Eqs.~(\ref{eq:Gkstaroua}-\ref{eq:Gkstarouc})
    &  $8.5 \, 10^{-6}$ & ${\bf 1.4 \,10^{-8}}$\\
 \midrule
 \bottomrule
\end{tabular} 
}
\caption{Comparisons of the efficiencies of potentials (example 2).
The results are obtained for  $N=2000$,  $a=4$, $\tau=0.1$, $n=10$. Here $t_n=1$ and $p_h=8.5\, 10^{-6}$ (so $\sigma^2_{MC}= 8.5\, 10^{-6} $).
 The IPS algorithm has been run $1000$ times to get the empirical values
$mean(\hat p_h)$ and $\hat \sigma^2_{IPS,G}$.
}
\label{tab:res5}
\end{table}

\begin{table}[ht]
\centering
{\small
\begin{tabular}{|c|c|c|}
\toprule
\midrule
$G_k(\mbf z_k)$  & $mean(\hat p_h)$ & $\hat \sigma^2_{IPS,G}$\\
\midrule
 $\exp\left[\alpha z_k  \right]$, $\alpha=0.05$
    & $2.6 \, 10^{-5}$ & $1.0 \, 10^{-5}$\\
 $\exp\left[\alpha z_k  \right]$, $\alpha=0.10$
    & $2.6 \, 10^{-5}$ & $5.6\, 10^{-6}$\\
 $\exp\left[\alpha z_k  \right]$, ${\bf \boldsymbol{\alpha}=0.15}$
    & $2.7 \, 10^{-5}$ & ${\bf 4.3 \, 10^{-6}}$\\
 $\exp\left[\alpha z_k  \right]$, $\alpha=0.20$
    & $2.7 \, 10^{-5}$ & $ 5.1 \, 10^{-6}$\\
 $\exp\left[\alpha z_k  \right]$, $\alpha=0.25$
    & $2.7 \, 10^{-5}$ & $1.9  \, 10^{-5}$\\
 $\exp\left[\alpha z_k  \right]$, $\alpha=0.30$
    & $2.8 \, 10^{-5}$ & $5.2  \, 10^{-5}$\\
 $\exp\left[\alpha(z_k\text{-}z_{k\text{-}1}) \right]$, $\alpha=1.0$
    & $2. 8\, 10^{-5}$ & $ 1.4\, 10^{-6}$\\
 $\exp\left[\alpha(z_k\text{-}z_{k\text{-}1}) \right]$, $\alpha=1.5$
    & $2. 8\, 10^{-5}$ & $4.5 \, 10^{-7}$\\
 $\exp\left[\alpha(z_k\text{-}z_{k\text{-}1}) \right]$, $\alpha=2.0$
    & $2.6\, 10^{-5}$ & $ 2.1 \, 10^{-7}$\\
 $\exp\left[\alpha(z_k\text{-}z_{k\text{-}1}) \right]$, ${\bf \boldsymbol{\alpha}=2.5}$
    & $2.7\, 10^{-5}$ & ${\bf 1.4  \, 10^{-7}}$\\
 $\exp\left[\alpha(z_k\text{-}z_{k\text{-}1}) \right]$, $\alpha=3.0$
    & $2.7 \, 10^{-5}$ & $1.9 \, 10^{-7}$\\
 $\exp\left[\alpha(z_k\text{-}z_{k\text{-}1}) \right]$, $\alpha=3.5$
    & $2.7 \, 10^{-5}$ & $4.6 \, 10^{-7}$\\
 $\exp\left[\alpha(z_k\text{-}z_{k\text{-}1}) \right]$, $\alpha=4.0$
    & $ 2.8 \, 10^{-5}$ & $ 2.4 \, 10^{-6}$\\
 $G_k(\mbf z_k)$  Eqs.~(\ref{eq:Gex2aou}-\ref{eq:Gex2bou})
    &  $2.7 \, 10^{-5}$ & ${\bf 7.7 \, 10^{-8}}$\\
   $G_k^*(\mbf z_k)   $   Eqs.~(\ref{eq:Gkstaroua}-\ref{eq:Gkstarouc})
    &  $2.7 \, 10^{-5}$ & ${\bf 7.5 \,10^{-8}}$\\
 \midrule
 \bottomrule
\end{tabular} 
}
\caption{Comparisons of the efficiencies of potentials (example 2).
The results are obtained for  $N=2000$,  $a=4$, $\tau=0.1$, $n=20$. Here $t_n=2$ and $p_h= 2.7\, 10^{-5} $ (so $\sigma^2_{MC}= 2.7\, 10^{-5} $).
 The IPS algorithm has been run $1000$ times to get the empirical values
$mean(\hat p_h)$ and $\hat \sigma^2_{IPS,G}$.
}
\label{tab:res5b}
\end{table}

\begin{table}[ht]
\centering
{\small
\begin{tabular}{|c|c|c|}
\toprule
\midrule
$G_k(\mbf z_k)$  & $mean(\hat p_h)$ & $\hat \sigma^2_{IPS,G}$\\
\midrule
 $\exp\left[\alpha z_k  \right]$, $\alpha=0.05$
    & $3.2 \, 10^{-5}$ & $1.4 \, 10^{-5}$\\
 $\exp\left[\alpha z_k  \right]$, ${\bf \boldsymbol{\alpha}=0.10}$
    & $3.0 \, 10^{-5}$ & ${\bf 1.3\, 10^{-5}}$\\
 $\exp\left[\alpha z_k  \right]$, $\alpha=0.15$
    & $3.1 \, 10^{-5}$ & $2.2 \, 10^{-5}$\\
 $\exp\left[\alpha z_k  \right]$, $\alpha=0.20$
    & $2.7 \, 10^{-5}$ & $ 5.8 \, 10^{-5}$\\
 $\exp\left[\alpha(z_k\text{-}z_{k\text{-}1}) \right]$, $\alpha=1.0$
    & $3.1\, 10^{-5}$ & $ 1.6\, 10^{-6}$\\
 $\exp\left[\alpha(z_k\text{-}z_{k\text{-}1}) \right]$, $\alpha=1.5$
    & $3.1\, 10^{-5}$ & $5.5 \, 10^{-7}$\\
 $\exp\left[\alpha(z_k\text{-}z_{k\text{-}1}) \right]$, ${\bf \boldsymbol{\alpha}=2.0}$
    & $3.1\, 10^{-5}$ & $ {\bf 3.1 \, 10^{-7}}$\\
 $\exp\left[\alpha(z_k\text{-}z_{k\text{-}1}) \right]$, $\alpha =2.5$
    & $3.1\, 10^{-5}$ & $4.6   \, 10^{-7}$\\
 $\exp\left[\alpha(z_k\text{-}z_{k\text{-}1}) \right]$, $\alpha=3.0$
    & $3.0 \, 10^{-5}$ & $1.7\, 10^{-6}$\\
 $G_k(\mbf z_k)$  Eqs.~(\ref{eq:Gex2aou}-\ref{eq:Gex2bou})
    &  $3.1 \, 10^{-5}$ & ${\bf 9.4 \, 10^{-8}}$\\
   $G_k^*(\mbf z_k)   $   Eqs.~(\ref{eq:Gkstaroua}-\ref{eq:Gkstarouc})
    &  $3.1 \, 10^{-5}$ & ${\bf 9.0 \,10^{-8}}$\\
 \midrule
 \bottomrule
\end{tabular} 
}
\caption{Comparisons of the efficiencies of potentials (example 2).
The results are obtained for  $N=2000$,  $a=4$, $\tau=0.1$, $n=30$. Here $t_n=3$ and $p_h= 3.1\, 10^{-5} $ (so $\sigma^2_{MC}= 3.1\, 10^{-5} $).
 The IPS algorithm has been run $1000$ times to get the empirical values
$mean(\hat p_h)$ and $\hat \sigma^2_{IPS,G}$.
}
\label{tab:res5c}
\end{table}

\subsection{Third example}
\label{subseq:fifth}%
We finally consider a diffusion process, that is the solution of the stochastic differential equation
\begin{equation}
\label{eq:sdeex}
dX_t = \sqrt[4]{1+X_t^2} dW_t - (X_t-0.5\sin(X_t)) dt, \quad \quad 
X_0=0.
\end{equation}
The discretized process $Z_k=X_{k\tau}$ is Markov.
The goal is to compute the probability that $X_{n\tau}$ exceeds a large  positive value $a$.
Therefore we take $h(\mbf z_n)=\indic{[a,+\infty)}(z_n)$ so that
$p_h =\mathbb P(Z_n\geq a)$.
There is no way to compute analytically $p_h$.
Table \ref{tab:res5d} shows that the IPS method  
can easily reduce the variance of the estimation of $p_h$ compared to the standard Monte Carlo method by a factor up to $7000$ when 
$a=6$ and $p_h \simeq 1.3 \, 10^{-7}$.

\subsection{Fourth example}
\label{subsec:sixth}%
We consider again the diffusion process (\ref{eq:sdeex}) and its discretized version $Z_k=X_{k\tau}$.
The goal is now to compute the probability that $V(X_{n\tau})$ exceeds a large  positive value $a$, where $V(x)=|x|$.
 Therefore we take $h(\mbf z_n)=\indic{[a,+\infty)}(V(z_n))$ so that
$p_h =\mathbb P(V(Z_n)\geq a)$ (of course, by obvious symmetry, $p_h$ is here equal to twice the value of $p_h$ in the previous section).
This situation could seem more tricky than the one addressed in the previous section, because there are apparently two modes
of large deviations, either the process $X_t$ takes large values or it takes small values.
Under such circumstances, an importance sampling strategy, 
if it could be implemented, would have to be calibrated carefully with a bimodal biased distribution.
The IPS method is very robust to this kind of difficulties and automatically allocates particles in both important regions.
Table \ref{tab:res5e} shows that the IPS method  
can easily reduce the variance of the estimation of $p_h$ compared to the standard Monte Carlo method by a factor up to $4000$ when 
$a=6$ and $p_h \simeq 2.5 \, 10^{-7}$.

\begin{table*}[ht]
\hspace*{-0.6in}
{\small
\begin{tabular}{|c|c|c|c|c|c|c|c|c|}
\toprule
\midrule
& \multicolumn{2}{c|}{$a=3$} & \multicolumn{2}{c|}{$a=4$} & \multicolumn{2}{c|}{$a=5$} & \multicolumn{2}{c|}{$a=6$}\\
\midrule
$\alpha$ & $mean(\hat p_h)$ & $\hat \sigma^2_{IPS,G}$& $mean(\hat p_h)$ & $\hat \sigma^2_{IPS,G}$& $mean(\hat p_h)$ & $\hat \sigma^2_{IPS,G}$& $mean(\hat p_h)$ & $\hat \sigma^2_{IPS,G}$\\
\midrule
$0.5$
    & $7.9\, 10^{-4}$ & $ 3.1\, 10^{-4}$  & $4.5 \, 10^{-5}$ &  $1.0 \, 10^{-5}$ &
    $2.7\, 10^{-6}$ & $4.0 \, 10^{-7} $ & $1.4 \, 10^{-7}$ & $1.3 \, 10^{-8}$ \\
 $1.0$
    & $7.8\, 10^{-4}$ & $ 1.2\, 10^{-4}$ & $4.4 \, 10^{-5}$ &  $2.6 \, 10^{-6}$ &   $2.3\, 10^{-6}$ & $5.6 \, 10^{-8} $ &  $1.6 \, 10^{-7}$ &  $1.6 \, 10^{-9}$ \\
 $1.5$
    & $7.9\, 10^{-4}$ & $5.8 \, 10^{-5}$ & $4.4 \, 10^{-5}$ &  $8.4 \, 10^{-7}$ & $2.3\, 10^{-6}$ &$1.3 \, 10^{-8} $ &  $1.3 \, 10^{-7}$ & $2.3 \, 10^{-10}$\\
$2.0$
    & $7.9\, 10^{-4}$ & $ 3.8 \, 10^{-5}$ & $4.4 \, 10^{-5}$ &$3.9 \, 10^{-7}$ & $2.3\, 10^{-6}$ & $4.4 \, 10^{-9} $ &  $1.3 \, 10^{-7}$ & $5.6 \, 10^{-11}$\\
${\bf 2.5}$
    & $7.9\, 10^{-4}$ & ${\bf 3.7 \, 10^{-5}}$ & $4.4 \, 10^{-5}$ & ${\bf 2.9 \, 10^{-7}}$ &$2.3\, 10^{-6}$ &${\bf 2.4 \, 10^{-9} }$ & $1.3 \, 10^{-7}$ & ${\bf 1.9 \, 10^{-11}}$ \\
 $3.0$
    & $7.9 \, 10^{-4}$ & $1.3 \, 10^{-4}$ & $4.4 \, 10^{-5}$ & $6.1 \, 10^{-7}$ & $2.3\, 10^{-6}$ & $4.0 \, 10^{-9} $ &  $1.3 \, 10^{-7}$ & $3.5 \, 10^{-11}$\\
$3.5$
    & $8.4 \, 10^{-4}$ & $6.2 \, 10^{-3}$ & $4.9 \, 10^{-5}$ &$4.6 \, 10^{-4}$ & $2.1\, 10^{-6}$ & $1.8 \, 10^{-8} $ &  $1.2 \, 10^{-7}$ & $1.6 \, 10^{-10}$\\
 $4.0$
    & $ 5.4 \, 10^{-4}$ & $ 1.9 \, 10^{-2}$ & $4.2 \, 10^{-5}$ &$4.9 \, 10^{-4}$ & $2.1\, 10^{-6}$ & $8.6 \, 10^{-7} $ & $1.2 \, 10^{-7}$ & $1.3 \, 10^{-9}$ \\
     \midrule
 \bottomrule
\end{tabular} 
}
\caption{
Comparisons of the efficiencies of the potentials $\exp\left[\alpha(z_k\text{-}z_{k\text{-}1}) \right]$ for different $\alpha$ 
in order to estimate $p_h = \mathbb{P}( Z_n \geq a)$ (example 3).
The results are obtained for  $N=2000$,  $a\in \{3,4,5,6\}$, $\tau=0.1$, $n=10$ ($t_n=1$). 
The SDE (\ref{eq:sdeex}) is solved by the Euler-Maruyama  method with a time step $10^{-3}$.
The IPS algorithm has been run $1000$ times to get the empirical values
$mean(\hat p_h)$ and $\hat \sigma^2_{IPS,G}$.
}
\label{tab:res5d}
\end{table*}

\begin{table*}[ht]
\hspace*{-0.6in}
{\small
\begin{tabular}{|c|c|c|c|c|c|c|c|c|}
\toprule
\midrule
& \multicolumn{2}{c|}{$a=3$} & \multicolumn{2}{c|}{$a=4$} & \multicolumn{2}{c|}{$a=5$} & \multicolumn{2}{c|}{$a=6$}\\
\midrule
$\alpha$ & $mean(\hat p_h)$ & $\hat \sigma^2_{IPS,G}$& $mean(\hat p_h)$ & $\hat \sigma^2_{IPS,G}$& $mean(\hat p_h)$ & $\hat \sigma^2_{IPS,G}$& $mean(\hat p_h)$ & $\hat \sigma^2_{IPS,G}$\\
\midrule
$0.5$
    & $1.6\, 10^{-3}$ & $ 8.6\, 10^{-4}$  & $9.0 \, 10^{-5}$ &  $2.7 \, 10^{-5}$ & $5.2\, 10^{-6}$ & $1.0 \, 10^{-6} $ & $2.8 \, 10^{-7}$ & $3.6 \, 10^{-8}$ \\
 $1.0$
    & $1.6\, 10^{-3}$ & $ 3.8\, 10^{-4}$ & $8.9 \, 10^{-5}$ &  $7.8 \, 10^{-6}$ &  $4.5\, 10^{-6}$ & $1.7 \, 10^{-7} $ &  $2.4 \, 10^{-7}$ &  $3.9 \, 10^{-9}$ \\
 $1.5$
    & $1.6\, 10^{-3}$ & $1.9 \, 10^{-4}$ & $8.9 \, 10^{-5}$ &  $3.0 \, 10^{-6}$ & $4.5\, 10^{-6}$ &$4.4 \, 10^{-8} $ &  $2.6 \, 10^{-7}$ & $9.7 \, 10^{-10}$\\
$2.0$
    & $1.6\, 10^{-3}$ & ${\bf 1.3 \, 10^{-4}}$ & $8.8 \, 10^{-5}$ &$1.5 \, 10^{-6}$ & $4.5\, 10^{-6}$ & $1.6 \, 10^{-8} $ &  $2.5 \, 10^{-7}$ & $1.6 \, 10^{-10}$\\
${\bf 2.5}$
    & $1.6\, 10^{-3}$ & $ {1.4 \, 10^{-4}}$ & $8.8 \, 10^{-5}$ & ${\bf 1.2 \, 10^{-6}}$ &$4.6\, 10^{-6}$ &${\bf 1.0 \, 10^{-8}}$ & $2.5 \, 10^{-7}$ & ${\bf 7.0 \, 10^{-11}}$ \\
 $3.0$
    & $1.6 \, 10^{-3}$ & $4.2 \, 10^{-4}$ & $8.8 \, 10^{-5}$ & $2.9 \, 10^{-6}$ & $4.5\, 10^{-6}$ & $1.8 \, 10^{-8} $ &  $2.6 \, 10^{-7}$ & $1.0 \, 10^{-10}$\\
$3.5$
    & $1.5 \, 10^{-3}$ & $8.7 \, 10^{-3}$ & $8.7 \, 10^{-5}$ &$3.7 \, 10^{-5}$ & $4.3\, 10^{-6}$ & $1.2 \, 10^{-7} $ &  $2.5 \, 10^{-7}$ & $7.7 \, 10^{-10}$\\
 $4.0$
    & $ 1.3 \, 10^{-3}$ & $ 1.4 \, 10^{-1}$ & $8.4 \, 10^{-5}$ &$7.5 \, 10^{-4}$ & $3.9\, 10^{-6}$ & $3.7 \, 10^{-6} $ & $2.4 \, 10^{-7}$ & $2.4 \, 10^{-8}$ \\
     \midrule
 \bottomrule
\end{tabular} 
}
\caption{
Comparisons of the efficiencies of the potentials $\exp\left[\alpha(V(z_k)\text{-}V(z_{k\text{-}1})) \right]$ in order to estimate 
$p_h = \mathbb{P}( V(Z_n) \geq a)$ for $V(z)=|z|$ and for different $\alpha$ (example 4).
The results are obtained for  $N=2000$,  $a\in \{3,4,5,6\}$, $\tau=0.1$, $n=10$ ($t_n=1$). 
The SDE (\ref{eq:sdeex}) is solved by the Euler-Maruyama  method with a time step $10^{-3}$.
The IPS algorithm has been run $1000$ times to get the empirical values
$mean(\hat p_h)$ and $\hat \sigma^2_{IPS,G}$.
}
\label{tab:res5e}
\end{table*}

\section{Discussion and conclusion}
\label{sec:disc}

In this paper we give the expressions of the   potential functions minimizing the asymptotic variance of the IPS method with multinomial resampling. The existence of optimal potential functions proves that the possible variance reduction of an IPS method is lower-bounded. The expressions  have been validated analytically, and the expression for the minimal variance has been empirically confirmed in numerical simulations.

The analysis of  the expressions of the optimal potential functions confirm empirical observations reported in the literature. 
First, in \cite{del2005genealogical} the authors make the observation that it seems better to build a potential that depends on the increments of an energy function, this observation is confirmed as the optimal potential function is the multiplicative increment of the quantity $\sqrt{\mathbb E\Big[\, \mathbb E\big[ h(\mbf Z_{n})\big|\mbf Z_{k}\big]^2\big|\mbf Z_{k-1} = \mbf z_k\Big]}$, which is then the optimal energy function. Second, the fact that in \cite{wouters2016rare} the authors find better results with time-dependent potentials is explained by the fact the expressions of the optimal potential functions show an explicit dependence on $k$. 
Third, one can notice that  the optimal potential function $G_k^*(\mathbf z_k) $ only depends on $z_k$,  $z_{k-1}$  and on $k$. 
This shows that there is no advantage in looking for potential functions that depend on other variables.  
Fourth, if the underlying Markov process is ergodic and mean-reverting, then an efficient potential does not impose any pressure selection in the early steps of the random dynamics but only in the final steps, during the time interval that preceeds the target time and that has a width 
of the order of the mean-reversion time.
Finally, as splitting methods can be viewed as a version of the IPS method with indicator potential functions, our results show  that the selections of splitting algorithms are not optimal, and could be improved by using information on the expectations $\mathbb E\big[ h(\mbf Z_{n})\big|\mbf Z_{k }=\textbf z_k\big]$.

The optimal potential functions may be hard to find in practice. Indeed, the expectations $\mathbb E\big[ h(\mbf Z_{n})\big|\mbf Z_{k }=\mbf z_k\big]$ play a big role in the expression of the optimal potentials, but if we are trying  to assess $p_h=\mathbb E\big[ h(\mbf Z_{n}) \big]$, we typically lack information about the conditional expectations $\mathbb E\big[ h(\mbf Z_{n})\big|\mbf Z_{k }=\mbf z_k \big]$. If no information on the expectation $\mathbb E\big[ h(\mbf Z_{n})\big|\mbf Z_{k }=\mbf z_k\big]$ is available, it might be preferable to use more naive variance reduction method, where no input functions are needed. In such context, the Weighted Ensemble (WE) method  \cite{aristoff2018analysis,aristoff2018arXiv180600860} seems to be a good candidate, as it does not take in input potential functions but only a partition of the state space.
Conversely if the practitioner has qualitative information about the expectation $\mathbb E\big[ h(\mbf Z_{n})\big|\mbf Z_{k }=\mbf z_k\big]$, this information could be used to derive  very efficient potentials. 
This information could be acquired through an adaptive version of the IPS algorithm.

We believe that a  favorable context for the optimal IPS method  is the multifidelity Monte Carlo framework \cite{multifidelity}. 
Here we typically consider a complex system whose simulation cost by a high-fidelity code is very large, 
but it may happen that a low-fidelity code is available
that is faster to run but approximate. Under such circumstances, the low-fidelity code could be used to estimate the optimal potential functions,
and then the IPS method could be applied with the high-fidelity code.

The (partial) knowledge of the expectations $\mathbb E\big[ h(\mbf Z_{n})\big|\mbf Z_{k }=\mbf z_k\big]$
is therefore important for a well optimized use of the IPS method, but it is  interesting to remark that the same knowledge seems to be important for a well optimized importance sampling method  \cite{chraibi2019optimal}. This confirms the well known fact that, with a good knowledge of the dynamics of the process $(\mbf Z_k)_{k\geq0}$, the importance sampling method may be preferable to the IPS if it can be implemented. 
Nonetheless the IPS method may still be preferred to the importance sampling method 
because 1) it is not intrusive and 2) it should be preferred when one fears to be in an over-biasing situation, because   the IPS method  does not alter the propagation and the over-biasing phenomenon should then be less important than with importance sampling methods.

\providecommand{\bysame}{\leavevmode\hbox to3em{\hrulefill}\thinspace}
\providecommand{\MR}{\relax\ifhmode\unskip\space\fi MR }
\providecommand{\MRhref}[2]{%
  \href{http://www.ams.org/mathscinet-getitem?mr=#1}{#2}
}
\providecommand{\href}[2]{#2}

\end{document}